\newtheorem{thm}{Theorem}[section]
\newtheorem{pro}[thm]{Proposition}
\newtheorem{lem}[thm]{Lemma}
\theoremstyle{definition}
\newtheorem{dfn}[thm]{Definition}
\numberwithin{equation}{section}
\newcommand\md[1]{\Delta_2^{(#1)}}
\newcommand\mo[1]{\Delta^{(#1)}}
\begin{document}

\allowdisplaybreaks

\newcommand{\arXivNumber}{1406.4645}

\renewcommand{\PaperNumber}{075}

\FirstPageHeading

\ShortArticleName{An Asymmetric Noncommutative Torus}

\ArticleName{An Asymmetric Noncommutative Torus}

\Author{Ludwik D\c{A}BROWSKI~$^\dag$ and Andrzej SITARZ~$^{\ddag\S}$}

\AuthorNameForHeading{L.~D\c{a}browski and A.~Sitarz}

\Address{$^\dag$~SISSA (Scuola Internazionale Superiore di Studi Avanzati),\\
\hphantom{$^\dag$}~via Bonomea 265, 34136 Trieste, Italy}
\EmailD{\href{mailto:dabrow@sissa.it}{dabrow@sissa.it}}

\Address{$^\ddag$~Institute of Physics, Jagiellonian University,\\
\hphantom{$^\ddag$}~Stanis\l{}awa \L{}ojasiewicza 11, 30-348 Krak\'ow, Poland}
\EmailD{\href{mailto:andrzej.sitarz@uj.edu.pl}{andrzej.sitarz@uj.edu.pl}}

\Address{$^\S$~Institute of Mathematics of the Polish Academy of Sciences,\\
\hphantom{$^\S$}~\'Sniadeckich 8, 00-656 Warszawa,  Poland}

\ArticleDates{Received December 09, 2014, in f\/inal form September 17, 2015; Published online September 26, 2015}

\Abstract{We introduce a family of spectral triples that describe the curved noncommutative two-torus. The relevant family of new Dirac operators is given by rescaling one of two terms in the f\/lat Dirac operator. We compute the dressed scalar curvature and show that the Gauss--Bonnet theorem holds (which is not covered by the general result of Connes and Moscovici).}

\Keywords{noncommutative geometry; Gauss--Bonnet; spectral triple}
\Classification{58B34; 46L87}

\section{Introduction}

In the seminal works \cite{cohcon, CoTr} Connes and Tretkof\/f initiated the investigation of curvature
aspects on the noncommutative two torus ${\mathbb T}_\theta^2$ and have shown the analogue of the Gauss--Bonnet theorem
for the conformally rescaled Dirac operator~$D$ and the related spin Laplacian corresponding to the standard
conformal structure. In~\cite{fatkha} these studies were extended to arbitrary conformal structure. The
scalar curvature itself was def\/ined and computed in~\cite{conmos2} and independently in~\cite{fatkha1}.

The methods used in these papers build on  Connes' pseudodif\/ferential calculus~\cite{CoTr}
and heat kernel short-time asymptotic expansion. The novelty therein is the employment of
{\em twisted} (or {\em non-unimodular}) spectral triples~\cite{cm2}, and as a consequence non-tracial weights and the modular operator. For some related papers see~\cite{bhumar12,fatkha2,fatwon}.

In the present paper we introduce a family of {\em usual} spectral triples in the sense of \cite{cm1},
known also as $K$-cycles~\cite[Def\/inition IV.2.11]{Co94}, that describe a curved~${\mathbb T}_\theta^2$.
The relevant new Dirac operators correspond to the rescaling one of two terms in the f\/lat Dirac operator by a positive element $k$ from the opposite coordinate algebra in the commutant.
This assures that the commutators of~$D$ with
the elements of the algebra are bounded.
Thus, to the best of our knowledge, this provides the f\/irst instance of usual spectral triples which describe non f\/lat (curved) noncommutative geometry.
It should also be stressed that partial rescalings by~$k$ from the algebra itself go outside the realm of spectral triples (even twisted), with geometric meaning (to the best of our knowledge) not yet fully clarif\/ied (in this respect see however, e.g.,~\cite{pw15}).

Unlike the classical case the new family of Dirac operators is not conformally related to the standard family of ``f\/lat'' (equivariant) Dirac operators.
Since they are not an overall rescaling of any of the so far known Dirac operators one cannot apply the general argument used by Connes and Moscovici~\cite[Theorem~2.1]{conmos2} to establish the Gauss--Bonnet theorem. Thus it is the f\/irst instance when one needs to ef\/fectively compute it, without certainty that it will be satisf\/ied.

This has been shown {\em perturbatively} up to second order in~\cite{DS}
for small modif\/ications of the f\/lat Dirac operator depending on four elements in the opposite algebra.
In the present paper we perform the {\em exact} computation, which shows that the Gauss--Bonnet theorem indeed extends to a much larger class of spectral triples.

Moreover, we compute the expression for the
dressed
scalar curvature as an element of the opposite algebra of~${\mathbb T}_\theta^2$.
In the classical case it corresponds to the product of the scalar curvature by the coef\/f\/icient of the metric volume form with respect to the Lebesgue volume.
It should be mentioned that such a dressing is actually tacitly present in all the previous computations.

The results obtained here are possible due to a recent neat generalisation by Lesch~\cite{lesch}
of the ``rearrangement lemma'', which is an important technical tool in~\cite{CoTr}.

The paper is organised as follows: in Section~\ref{section2} we brief\/ly present the
asymmetric rescaling of the classical f\/lat 2-torus and its curvature. Next we introduce a quantum (noncommutative) analogue of the associated Dirac operator, and the corresponding new family of spectral triples on the noncommutative torus.
In Section~\ref{section3}, which is the main part of the paper, we compute the value of the respective zeta function of the new Dirac operators at $z=0$ following the heat trace techniques and pseudodif\/ferential calculus elaborated in~\cite{conmos2, CoTr}, and an adaptation of the Lesch rearrangement lemma~\cite{lesch}.
In Section~\ref{section4} we comment on some algebra computations of the (undressed) curvature.

\section{Dirac operator}\label{section2}

Let  ${\mathbb T}^2$ be the classical torus with coordinates $(x, y)\in [0, 2\pi]^2$,
equipped with the metric
\begin{gather}
\label{metric}
dx^2 + k^{-2}(x,y) dy^2,
\end{gather}
where $k$ is a  positive function.
For example such an ``asymmetric torus'' metric with
\begin{gather*}
k^{-1} = c + \cos y
\end{gather*}
comes as the induced metric on the usual realisation of~${\mathbb T}^2$ as an embedded surface in~${\mathbb R}^3$
with the following parametrisation
\begin{gather*}
X	=	(c+\cos  y)  \cos  x, \qquad	
Y	=	(c+ \cos y)  \sin  x, \qquad	
Z	=	\sin y .
\end{gather*}
\begin{figure}[h!]\centering
\begin{minipage}[b]{50mm}\centering
\includegraphics[width=4cm]{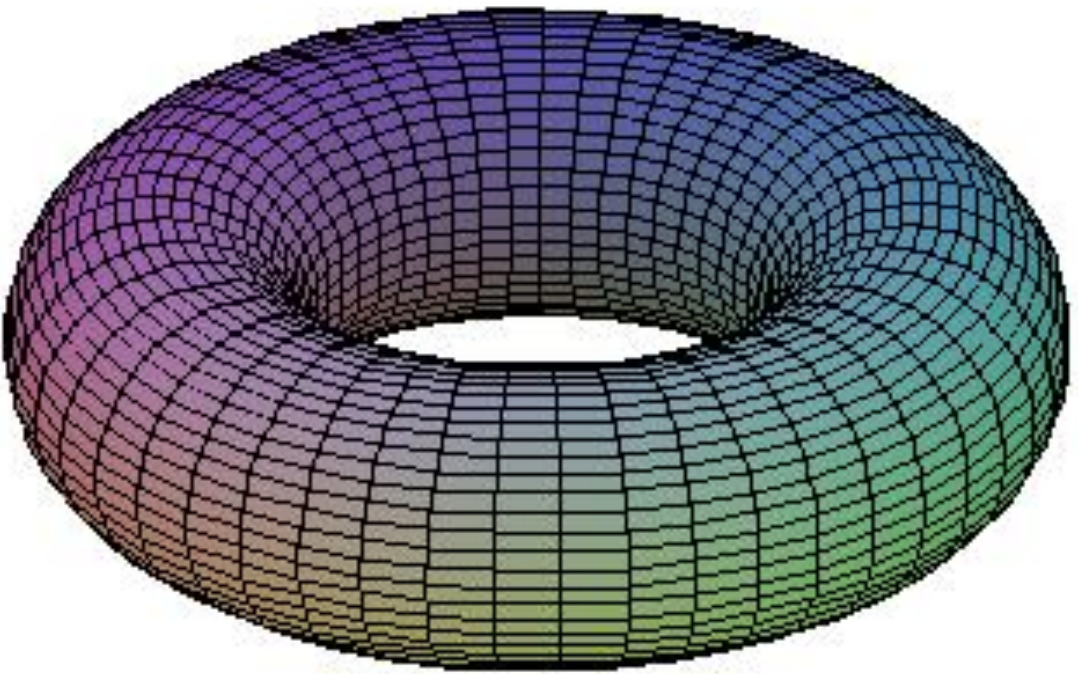}\\
Torus embedded in ${\mathbb R}^3$.
\end{minipage}\qquad
\begin{minipage}[b]{50mm}\centering
\includegraphics[width=4cm]{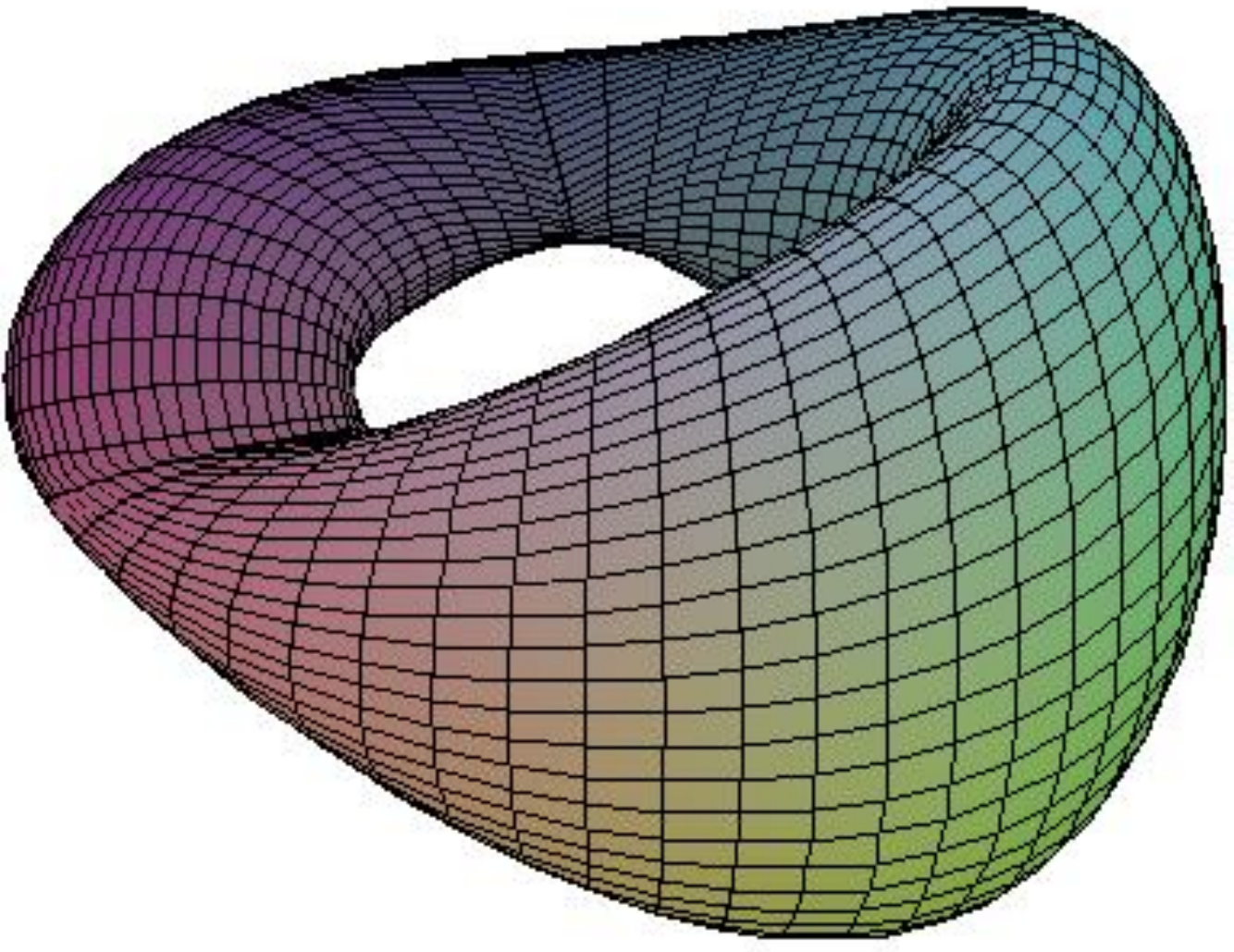} \\
 Asymmetric torus in ${\mathbb R}^3$.
\end{minipage}
\end{figure}

The scalar curvature of the torus with the metric \eqref{metric} reads
\begin{gather}
\label{classcurv}
R = 2k^{-1} \partial_x^2 (k) - 4 k^{-2} (\partial_x (k) )^2 .
\end{gather}

In the commutative case such metric is, of course, conformally equivalent to some f\/lat metric on the
torus even though the explicit formula for the curvature depends on the chosen coordinate system.
However, when passing to the noncommutative torus we are entering a new unexplored land, where
one does not know what is a {\em metric} and what is the exact meaning of {\em conformally equivalent}.
As in the approach of Connes the natural object is the Dirac operator rather than the metric itself.
For this reason we propose a new Dirac operator, which generalises to the noncommutative situation
the classical case of asymmetric torus.

We start with the commutative case of Dirac operator on $L^2({\mathbb T}^2, k^{-1} dx dy)\otimes {\mathbb C}^2$
for the metric~\eqref{metric}
\begin{gather*}
\tilde{D} = -i \sigma^1 \left(\partial_x -\tfrac{1}{2} k^{-1} \partial_x (k)  \right) -  i \sigma^2   k\  \partial_y ,
\end{gather*}
where
\begin{gather*}
\sigma^1= \left(
\begin{matrix}
0      & 1   \\
 1     &  0
\end{matrix}
\right),\qquad
\sigma^2= \left(
\begin{matrix}
0      & -i  \\
 i     &  0
\end{matrix}
\right).
\end{gather*}
Using the multiplication  by $\sqrt{k}$ we obtain a unitarily equivalent self-adjoint Dirac operator
\begin{gather}
\label{dirac}
\tilde{D} =  - i \sigma^1 \partial_x  -  i \sigma^2 \left(  k \partial_y + \tfrac{1}{2}  \partial_y(k)  \right)
\end{gather}
on the dense domain $H^{1,2}(T)$ in the Hilbert space $L^2(T,  dx dy)\otimes {\mathbb C}^2$.

Next we pass to the noncommutative torus ${\mathbb T}_\theta^2$ \cite{Co94}, about which we
collected the needed information in Appendix~\ref{appendixA}.
In particular we shall use therein
the usual trace $\mathfrak{t}$,
the usual self-adjoint derivations $\delta_1$, $\delta_2$,
and the standard real structure
(Tomita--Takesaki conjugation)~$J$ on the coordinate algebra~$A({\mathbb T}_\theta^2)$. Then we introduce the following new family of spectral triples, which is an immediate noncommutative analogue of~\eqref{dirac}.

\begin{dfn}
For $k \in J A({\mathbb T}_\theta^2) J^{-1}$ we set
\begin{gather*}
D_k =   \sigma^1 \delta_1 +   \sigma^2 \left(  k\, \delta_2 + \tfrac{1}{2}  \delta_2(k)  \right)
\end{gather*}
acting on ${\mathcal H}= L^2({\mathbb T}_\theta^2, \mathfrak{t})\otimes {\mathbb C}^2 $,
obtaining a family of spectral triples $({\mathbb T}_\theta^2, {\mathcal H},  D_k)$.
\end{dfn}

The form of the operator $D_k$ resembles the classical Dirac operator  $D$  \eqref{dirac}, however with the partial derivatives replaced by~$i\delta_1$,~$i\delta_2$.
Since we take~$k$ in the algebra
$J A({\mathbb T}_\theta^2) J^{-1}$ commuting with $A({\mathbb T}_\theta^2)$
due to the property~\eqref{eq:Jzero-LRcomm},
$D_k$ has bounded commutators with $a\in A({\mathbb T}_\theta^2)$.
Next the self-adjointness and the compact resolvent property follow from the fact that~$D_k$ is a perturbation of the f\/lat Dirac operator~$D$ (corresponding to $k=1$) by a (self-adjoint) bounded operator. Thus it def\/ines a spectral triple in the usual sense.

Furthermore, $D_k$ is a dif\/ferential operator in the sense of~\cite{CoTr} and we can extract the
associated (dressed) scalar curvature following the methods introduced and applied to
the global conformal rescaling of the noncommutative~\cite{conmos2,CoTr}. The idea
is to look for the functional:
\begin{gather*}
f \mapsto a_2(D_k; f),
\end{gather*}
where $f$ is an element of $J A({\mathbb T}_\theta^2) J^{-1}$ (or $A({\mathbb T}_\theta^2)$) and $a_2(D_k; f)$
is the second heat trace coef\/f\/icient of $\operatorname{Tr} (f e^{- t D_k^2})$. Since the computations
which use the pseudodif\/ferential operators allow us to express the results in terms of $\tau(f \widetilde{R})$
for an element~$\widetilde{R}$ of the algebra~$J A({\mathbb T}_\theta^2) J^{-1}$, one can interpret it as the
{\em dressed scalar curvature}, which in the classical case equals $\widetilde{R} = \sqrt{g} R$.

The noncommutative case is, in fact,  much richer, since the chiral heat trace functional
$\operatorname{Tr} (f\gamma e^{- t D_k^2})$ does not vanish identically. Using similar arguments
as above we can introduce the {\em dressed chiral scalar curvature}, $\widetilde{R_\gamma}$ as
an element of  $J A({\mathbb T}_\theta^2) J^{-1}$ such that the heat trace coef\/f\/icient~$a_2(D_k, f \gamma)$
is  expressed as~$\tau(f \widetilde{R}_\gamma)$.

\section{The curvature}\label{section3}
The square of $D$ reads
\begin{gather*}
(D_k)^2 = (\delta_1)^2 + k^2 (\delta_2)^2
           + \left( \tfrac{3}{2} k \delta_2(k) + \tfrac{1}{2} \delta_2(k) k + \sigma^1\sigma^2 \delta_1(k) \right) \delta_2 \\
\hphantom{(D_k)^2 =}{}
           + \tfrac{1}{4}  (\delta_2(k))^2 + \tfrac{1}{2} \sigma^1\sigma^2 \delta_{12}(k) + \tfrac{1}{2} k \delta_{22}(k),
\end{gather*}
and its symbol is
\begin{gather*}
\sigma((D_k)^2) = a_0 + a_1 +a_2,
\end{gather*}
where
\begin{gather*}
a_0 =  \xi_1^2 + k^2 \xi_2^2,  \qquad
a_1 = \left( \tfrac{3}{2} k \delta_2(k) + \tfrac{1}{2} \delta_2(k) k + \sigma^1\sigma^2 \delta_1(k) \right) \xi_2, \\
a_2 =  \tfrac{1}{4}  (\delta_2(k))^2 + \tfrac{1}{2} \sigma^1\sigma^2 \delta_{12}(k) + \tfrac{1}{2} k \delta_{22}(k).
\end{gather*}
As was demonstrated f\/irst in \cite{CoTr} the value $\zeta(0)$ at the origin of the zeta function
of the operator $(D_k)^2$ is given by
\begin{gather*}
\zeta(0)= - \int \mathfrak{t} (b_2(\xi))  d \xi,
\end{gather*}
where $b_2(\xi)$ is a symbol of order $-4$ of the pseudodif\/ferential operator $((D_k)^2+1)^{-1}$.
Here we assume that $(D_k)$ has no zero eigenvalues, which is satisf\/ied for generic $k$.
Otherwise, in the case that the Dirac operator has $0$ as an eigenvalue,
a constant term equal to $\dim (\operatorname{ker} (D_k))$ is present.
The term on the right hand side is interpreted as the quantum analogue of the integrated scalar curvature dressed by the square root of the determinant of the metric tensor.

The symbol $b_2(\xi)$ can be computed by pseudodif\/ferential calculus of symbols from the symbol
$a_2(\xi) + a_1(\xi) + a_0(\xi)$ of
$(D_k)^2$ as follows
\begin{gather}
b_0 =  (a_2+1)^{-1},\qquad
b_1 = -(b_0 a_1 b_0  + \partial_1(b_0) \delta_1(a_2) b_0 +
\partial_2(b_0)\delta_2(a_2) b_0), \nonumber\\
b_2=  -(b_0 a_0 b_0 + b_1 a_1 b_0 + \partial_1(b_0) \delta_1(a_1) b_0 +
\partial_2(b_0)\delta_2(a_1)b_0+ \partial_1(b_1)\delta_1(a_2)b_0
\nonumber \\
\hphantom{b_2=}{}  + \partial_2(b_1) \delta_2(a_2)b_0 + \tfrac{1}{2}\partial_{11}(b_0)\delta_1^2(a_2)b_0
  + \tfrac{1}{2}\partial_{22}(b_0)\delta_2^2(a_2)b_0  + \partial_{12}(b_0)\delta_{12}(a_2)b_0).
\label{b2}
\end{gather}

Since to obtain the curvature (or~$\zeta(0)$) we need to integrate
with respect to~$\xi_1$,~$\xi_2$, we notice that all the terms which contain odd powers of these variables shall vanish. Therefore, we can neglect them and keep for computations only the relevant parts with even powers. They read then as follows
\begin{gather*}
b_2^{\rm even} = A + B + C ,
\end{gather*}
where
\begin{gather*}
A=
 - 2 k b_0^2 \delta_1(k) k b_0 \delta_1(k) b_0 \xi_2^4
+ 4 k b_0^2 \delta_1(k) k b_0^2 \delta_1(k) b_0 \xi_1^2 \xi_2^4
- 2 k b_0^2 \delta_1(k) b_0 \delta_1(k) k b_0 \xi_2^4 \\
\hphantom{A=}{} + 4 k b_0^2 \delta_1(k) b_0^2 \delta_1(k) k b_0 \xi_1^2 \xi_2^4
+ 8 k b_0^3 \delta_1(k) k b_0 \delta_1(k) b_0 \xi_1^2 \xi_2^4
+ 8 k b_0^3 \delta_1(k) b_0 \delta_1(k) k b_0 \xi_1^2 \xi_2^4 \\
\hphantom{A=}{} - b_0 \delta_1(k) b_0 \delta_1(k) b_0 \xi_2^2
+ 2 b_0^2 \delta_1(k) \delta_1(k) b_0 \xi_2^2
- 2 b_0^2 \delta_1(k) k b_0 \delta_1(k) k b_0 \xi_2^4 \\
\hphantom{A=}{} + 4 b_0^2 \delta_1(k) k b_0^2 \delta_1(k) k b_0 \xi_1^2 \xi_2^4
- 2 b_0^2 \delta_1(k) k^2 b_0 \delta_1(k) b_0 \xi_2^4
+ 4 b_0^2 \delta_1(k) k^2 b_0^2 \delta_1(k) b_0 \xi_1^2 \xi_2^4 \\
\hphantom{A=}{} - 8 b_0^3 \delta_1(k) \delta_1(k) b_0 \xi_1^2 \xi_2^2
+ 8 b_0^3 \delta_1(k) k b_0 \delta_1(k) k b_0 \xi_1^2 \xi_2^4
+ 8 b_0^3 \delta_1(k) k^2 b_0 \delta_1(k) b_0 \xi_1^2 \xi_2^4,
\\
B=
\tfrac{15}{4} k b_0 \delta_2(k) k b_0 \delta_2(k) b_0 \xi_2^2
- 3 k b_0 \delta_2(k) k^2 b_0^2 \delta_2(k) k b_0 \xi_2^4
- 3 k b_0 \delta_2(k) k^3 b_0^2 \delta_2(k) b_0 \xi_2^4 \\
\hphantom{B=}{}+ \tfrac{9}{4} k b_0 \delta_2(k) b_0 \delta_2(k) k b_0 \xi_2^2
+ 6 k^2 b_0^2 \delta_2(k) \delta_2(k) b_0 \xi_2^2
- 8 k^2 b_0^2 \delta_2(k) k b_0 \delta_2(k) k b_0 \xi_2^4 \\
\hphantom{B=}{}- 10 k^2 b_0^2 \delta_2(k) k^2 b_0 \delta_2(k) b_0 \xi_2^4
+ 4 k^2 b_0^2 \delta_2(k) k^3 b_0^2 \delta_2(k) k b_0 \xi_2^6
+ 4 k^2 b_0^2 \delta_2(k) k^4 b_0^2 \delta_2(k) b_0 \xi_2^6 \\
\hphantom{B=}{}- 12 k^3 b_0^2 \delta_2(k) k b_0 \delta_2(k) b_0 \xi_2^4
+ 4 k^3 b_0^2 \delta_2(k) k^2 b_0^2 \delta_2(k) k b_0 \xi_2^6
+ 4 k^3 b_0^2 \delta_2(k) k^3 b_0^2 \delta_2(k) b_0 \xi_2^6 \\
\hphantom{B=}{}- 10 k^3 b_0^2 \delta_2(k) b_0 \delta_2(k) k b_0 \xi_2^4
- 8 k^4 b_0^3 \delta_2(k) \delta_2(k) b_0 \xi_2^4
+ 8 k^4 b_0^3 \delta_2(k) k b_0 \delta_2(k) k b_0 \xi_2^6 \\
\hphantom{B=}{}+ 8 k^4 b_0^3 \delta_2(k) k^2 b_0 \delta_2(k) b_0 \xi_2^6
+ 8 k^5 b_0^3 \delta_2(k) k b_0 \delta_2(k) b_0 \xi_2^6
+ 8 k^5 b_0^3 \delta_2(k) b_0 \delta_2(k) k b_0 \xi_2^6 \\
\hphantom{B=}{}- \tfrac{1}{4} b_0 \delta_2(k) \delta_2(k) b_0
+ \tfrac{3}{4} b_0 \delta_2(k) k b_0 \delta_2(k) k b_0 \xi_2^2
+ \tfrac{5}{4} b_0 \delta_2(k) k^2 b_0 \delta_2(k) b_0 \xi_2^2 \\
\hphantom{B=}{}- b_0 \delta_2(k) k^3 b_0^2 \delta_2(k) k b_0 \xi_2^4
- b_0 \delta_2(k) k^4 b_0^2 \delta_2(k) b_0 \xi_2^4,
\end{gather*}
and
\begin{gather*}
C =  k b_0^2 \delta_{11}(k) b_0 \xi_2^2
- 4 k b_0^3 \delta_{11}(k) b_0 \xi_1^2 \xi_2^2
  + b_0^2 \delta_{11}(k) k b_0 \xi_2^2
- 4 b_0^3 \delta_{11}(k) k b_0 \xi_1^2 \xi_2^2- \tfrac{1}{2} k b_0 \delta_{22}(k) b_0 \\
\hphantom{C=}{}
+ 2 k^2 b_0^2 \delta_{22}(k) k b_0 \xi_2^2
+ 4 k^3 b_0^2 \delta_{22}(k) b_0 \xi_2^2
 - 4 k^4 b_0^3 \delta_{22}(k) k b_0 \xi_2^4
- 4 k^5 b_0^3 \delta_{22}(k) b_0 \xi_2^4.
\end{gather*}

Similarly for the chiral part
$\widetilde{R}_\gamma$ of the curvature, we have
\begin{gather*}
b_{\gamma 2}^{\rm even} =
A_{\gamma } + B_{\gamma } + C_{\gamma },
\end{gather*}
where
\begin{gather*}
A_{\gamma } =
- 2 k^2 b_0^2 \delta_1(k) k b_0 \delta_2(k) b_0 i \xi_2^4
- 2 k^2 b_0^2 \delta_1(k) b_0 \delta_2(k) k b_0 i \xi_2^4 + \tfrac{5}{2} b_0 \delta_1(k) k  b_0 \delta_2(k) b_0 i \xi_2^2\\
\hphantom{A_{\gamma } = }{}
- 2 b_0 \delta_1(k) k^2 b_0^2 \delta_2(k) k b_0 i \xi_2^4
 - 2 b_0 \delta_1(k) k^3 b_0^2 \delta_2(k) b_0 i \xi_2^4
+ \tfrac{3}{2} b_0 \delta_1(k) b_0 \delta_2(k) k b_0 i \xi_2^2,
\\
 B_{\gamma } =
   \tfrac{3}{2} k b_0 \delta_2(k) b_0 \delta_1(k) b_0 i \xi_2^2
- 2 k^2 b_0^2 \delta_2(k) k b_0 \delta_1(k) b_0 i \xi_2^4 \\
\hphantom{B_{\gamma } = }{}
 - 2 k^3 b_0^2 \delta_2(k) b_0 \delta_1(k) b_0 i \xi_2^4
+ \tfrac{1}{2} b_0 \delta_2(k) k b_0 \delta_1(k) b_0 i \xi_2^2,
\end{gather*}
and
\begin{gather*}
C_{\gamma } =
2 k^2 b_0^2 \delta_{12}(k) b_0 i \xi_2^2
- \tfrac{1}{2} b_0 \delta_{12}(k) b_0 i.
\end{gather*}

\subsection{The classical limit}
At this point we can check the classical (commutative) value of our expressions for $\theta =0$.
The symbol $b_2$ becomes
\begin{gather*}
b_2(\xi) =     48 b_0^5 k^6 \delta_2(k)^2 \xi_2^6 + 48 b_0^5 k^2 \delta_1(k)^2 \xi_1^2 \xi_2^4 - 8 b_0^4 k^5 \delta_{22}(k) \xi_2^4
           - 56 b_0^4 k^4 \delta_2(k)^2 \xi_2^4 \\
\hphantom{b_2(\xi) = }{}
           - 8 b_0^4 k^2 \delta_1(k)^2 \xi_2^4 - 8 b_0^4 k \delta_{11}(k) \xi_1^2 \xi_2^2
           - 8 b_0^4 \delta_1(k)^2 \xi_1^2 \xi_2^2 + 6 b_0^3 k^3 \delta_{22}(k) \xi_2^2 \\
 \hphantom{b_2(\xi) = }{}
           + 14  b_0^3 k^2 \delta_2(k)^2 \xi_2^2
 + 2 b_0^3 k \delta_{11}(k) \xi_2^2 + b_0^3 \delta_1(k)^2 \xi_2^2 - \tfrac{1}{2} b_0^2 k \delta_{22}(k) - \tfrac{1}{4} b_0^2 \delta_2(k)^2,
\end{gather*}
which after integration gives
\begin{gather*}
\int b_2 d\xi_1 d\xi_2
 = - \frac{\pi}{3} \frac{(\delta_1(k))^2}{k^3} + \frac{\pi}{6}  \frac{\delta_{11}(k)}{k^2}.
\end{gather*}
Taking into account that we compute the Gilkey--Seeley--deWitt coef\/f\/icients of the
heat kernel asymptotic for
the square of the Dirac operator and not the Laplace operator itself,
and assuming as above that $D_k$ has no zero eigenvalues, we have
\begin{gather*}
\zeta(0) =  \frac{1}{48 \pi} \int \sqrt{g} R.
\end{gather*}

The assumption that $0$ is not in the spectrum of $D_k$, is satisf\/ied for generic $k$.
Otherwise, in the case that~$D_k$  has $0$ as an eigenvalue, there is
a constant term contribution equal to~$\dim (\operatorname{ker} D_k)$.

Moreover, since
$\mathfrak{t} =  \frac{1}{4\pi^2} \int dx dy$ for $\theta = 0$, taking into account
the appropriate rescaling of the volume form and putting it all together we obtain
\begin{gather*}
\sqrt{g} R = 48 \pi \frac{1}{4\pi^2}
\left( - \frac{\pi}{3} \frac{(\partial_1(k))^2}{k^3} + \frac{\pi}{6}  \frac{\partial_{11}(k)}{k^2} \right)
= \left( 2 k^{-2} \partial_{11}(k) - 4 k^{-3} (\partial_1(k))^2 \right),
\end{gather*}
which agrees with the classical formula~(\ref{classcurv}).
Similarly, the symbol $ b_{\gamma 2}$ becomes
\begin{gather*}
b_{\gamma 2}(\xi) =
       - 12 b_0^4 k^3 \delta_1(k) \delta_2(k) i \xi_2^4 + 2 b_0^3 k^2 \delta_{12}(k) i \xi_2^2 + 6 b_0^3 k
      \delta_1(k) \delta_2(k) i \xi_2^2 - \tfrac{1}{2} b_0^2 \delta_{12}(k) i,
\end{gather*}
which after integration yields
\begin{gather*}
\int d\xi_1 d\xi_2   b_{\gamma 2} = 0
\end{gather*}
and thus
\begin{gather*}  \sqrt{g} R_\gamma = 0.
\end{gather*}

Before we can proceed with the noncommutative computation let us recall the general
framework of computations as shown recently by Lesch~\cite{lesch}.

\subsection{Rearrangement lemma}

In \cite{lesch} Lesch  proved the following formula
(with a slightly modif\/ied notation)
\begin{gather*}
\int_0^\infty  f_0\big(uk^2\big)   a_1
f_1\big(uk^2\big)  a_2 \cdots a_p   f_p\big(uk^2\big) du \nonumber \\
\qquad {} = k^{-2} m\, F\big(\md{1},\md{1}\md{2},\ldots, \md{1} \cdots \md{p}\big)(a_1 \otimes a_2 \otimes \cdots\otimes a_p),
\end{gather*}
where $m$ is the $p$-fold multiplication, the function $F$ of $p$ variables reads
\begin{gather*}
F(s_1,\ldots,s_p) = \int_0^\infty f_0(u)f_1(us_1)\cdots f_p(u s_p) du
\end{gather*}
and $\md{j}$ signif\/ies the square of the modular operator~$\Delta$, $\Delta (a) = k^{-1} a k$,
acting on the $j$-th factor $a_j$ in the $p$-fold
tensor product product $a_1 \otimes a_2 \otimes \cdots\otimes a_p$.

In our case we need to adapt the formula to a slightly dif\/ferent setting, when we integrate over two
variables~$\xi_1$ and~$\xi_2$. All the integrals we have are of the form
\begin{gather*}
{\mathcal J} = \int_{- \infty}^\infty d\xi_1 \int_{- \infty}^\infty d\xi_2
k^{n_1} b_0^{m_1}(\xi_1, \xi_2)   X   k^{n_2} b_0^{m_2}(\xi_1, \xi_2)   Y   k^{n_3} b_0^{m_3}(\xi_1, \xi_2) \xi_1^{2 k_1} \xi_2^{2 k_2}, \end{gather*}
where
$X$, $Y$ are some derivatives of $k$ and
\begin{gather*}
b_0(\xi_1, \xi_2) = \frac{1}{1 + \xi_1^2 + k^2 \xi^2}.
\end{gather*}

Extending the result of Lesch we see that
\begin{gather*}
{\mathcal J} = F\big(\mo{1},\mo{1} \mo{2}\big)(X \cdot Y),
\end{gather*}
where after change of variables we obtain
\begin{gather*}
F(s,t) =  2 \int_{0}^\infty \!d v \int_{0}^\infty\! d u     k^{n_1+ n_2+ n3 -1 -2k_2}  \frac{u^{k_2 - \frac{1}{2}} v^{2 k_1} }{(1\!+\! v^2\! +\! u)^{m_1}}
 \frac{s^{n_2}}{(1\!+\! v^2\! + \!u s^2 )^{m_2}}  \frac{t^{n_3}}{(1\!+\! v^2\! + \!u t^2 )^{m_3}} .
 \end{gather*}

In case $Y=1$ the resulting function depends only on $s$.
\subsection{The curvature and its trace}

In order to compute explicitly the expressions for the curvature we shall use the following lemma.
\begin{lem}
Under the trace an entire function $F$ of two variables satisfies
\begin{gather*}
\mathfrak{t} \big(  F\big(\mo{1},\mo{1} \mo{2}\big)(X \cdot Y) \big) =
\mathfrak{t} \big(  F\big(\mo{1}, {\rm id} \big)(X Y)\big) = \mathfrak{t} \big(F\big(\mo{1},1\big)(X) Y\big).
\end{gather*}
and in case of one variable
\begin{gather*}
\mathfrak{t} \big(  F\big(\mo{1}\big)(X) \big) = \mathfrak{t}  (  F(1) X  ).
\end{gather*}
\end{lem}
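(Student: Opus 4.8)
The plan is to exploit the basic trace property of the modular operator. Recall that $\Delta(a)=k^{-1}ak$ is implemented so that $\mathfrak{t}$ is a KMS weight: concretely, $\mathfrak{t}(k^{-1}ak\,b)=\mathfrak{t}(a\,kbk^{-1})$, or more to the point $\mathfrak{t}(\Delta(a)b)=\mathfrak{t}(a\Delta^{-1}(b))$, and in particular $\mathfrak{t}(\mo{1}(a))=\mathfrak{t}(\Delta^2(a))=\mathfrak{t}(k^{-2}ak^2)=\mathfrak{t}(a)=\mathfrak{t}(\text{id}(a))$, since $\mathfrak{t}$ is a trace and $k$ is invertible. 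First I would verify the one-variable case: since $F$ is entire, $F(\mo{1})=\sum_n c_n (\mo{1})^n$, and applying $\mathfrak{t}$ term by term together with $\mathfrak{t}((\mo{1})^n(X))=\mathfrak{t}(\Delta^{2n}(X))=\mathfrak{t}(X)$ gives $\mathfrak{t}(F(\mo{1})(X))=\sum_n c_n\mathfrak{t}(X)=\mathfrak{t}(F(1)X)$. The interchange of $\mathfrak{t}$ with the infinite sum is justified by the trace-norm convergence of the power series applied to a bounded operator (the spectrum of $\mo{1}$ on the relevant finite-dimensional-over-$k$ subspace is bounded), which I would state but not belabor.

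Next I would treat the two-variable identity, which is the substantive part. Write $F(\mo{1},\mo{1}\mo{2})=\sum_{m,n}c_{mn}(\mo{1})^m(\mo{1}\mo{2})^n$, so that, acting on $X\otimes Y$ and then multiplying, each monomial contributes $\mathfrak{t}\big(\Delta^{2(m+n)}(X)\,\Delta^{2n}(Y)\big)$, because $\mo{1}$ acts only on the first leg and $\mo{1}\mo{2}$ acts diagonally on both. The key computation is the trace identity
\begin{gather*}
\mathfrak{t}\big(\Delta^{2(m+n)}(X)\,\Delta^{2n}(Y)\big)=\mathfrak{t}\big(k^{-2(m+n)}Xk^{2(m+n)}\cdot k^{-2n}Yk^{2n}\big)=\mathfrak{t}\big(\Delta^{2m}(X)\,Y\big),
\end{gather*}
where I have used cyclicity of $\mathfrak{t}$ to shed the outer conjugation by $k^{2n}$. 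Thus the sum collapses to $\sum_{m,n}c_{mn}\mathfrak{t}(\Delta^{2m}(X)Y)=\mathfrak{t}\big(F(\mo{1},\text{id})(XY)\big)$ once one notes $\Delta^{2m}(X)Y=m(\Delta^{2m}\otimes\text{id})(X\otimes Y)$; and since $\Delta^{2m}(X)Y$ further satisfies $\mathfrak{t}(\Delta^{2m}(X)Y)=\mathfrak{t}(F(1,1)\text{-term})$... more precisely, replacing $\text{id}$ in the second slot is the same as setting the second variable to $1$ after the multiplication has been performed, giving the final form $\mathfrak{t}\big(F(\mo{1},1)(X)\,Y\big)$.

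The main obstacle I anticipate is purely bookkeeping: being careful that the multiplication map $m$ is applied in the right place, so that "$\mo{1}\mo{2}$ on $X\otimes Y$ followed by $m$" really does equal "$k^{-2}(XY)k^2$" and not something with $k$'s interleaved — this works precisely because $\Delta$ is an algebra automorphism, $\Delta(XY)=\Delta(X)\Delta(Y)$, so the diagonal action commutes with multiplication. The other point needing a word of care is the passage from $F(\mo{1},\text{id})$ to $F(\mo{1},1)$: these differ in that the former still formally carries a second tensor slot, but after $m$ has merged the two legs the operator $\text{id}$ on the (now nonexistent) second slot is literally the substitution $t=1$, so the two expressions agree. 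I would also remark that entirety of $F$ is used only to make sense of these manipulations as norm-convergent series; for the polynomial $F$'s that actually arise in the curvature computation the identity is a finite, elementary rearrangement.
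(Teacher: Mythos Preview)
Your argument is essentially the paper's own proof: expand $F$ as a power series, use that $\Delta$ is an algebra automorphism so that $\Delta^{m}\big(\Delta^{n}(X)Y\big)=\Delta^{n+m}(X)\Delta^{m}(Y)$, and then invoke $\mathfrak{t}\circ\Delta=\mathfrak{t}$ (equivalently, cyclicity of $\mathfrak{t}$) to strip the diagonal power and reduce to $\mathfrak{t}\big(\Delta^{n}(X)Y\big)$. One small slip: in this paper $\mo{j}=\Delta^{(j)}$ denotes $\Delta$ itself acting on the $j$-th tensor leg, not its square (that is $\md{j}=\Delta_2^{(j)}$, used only in the Lesch formula), so your exponents should read $n+m$ and $m$ rather than $2(m+n)$ and $2n$; the computation is otherwise unchanged.
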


\begin{proof}
We have
\begin{gather*}F(s,t) = \sum_{n,m \geq 0} f_{nm} s^n t^m,
\end{gather*}
so
\begin{gather*}
\mathfrak{t} \big(  F\big(\mo{1},\mo{1} \mo{2}\big)(X \cdot Y) \big)   = \sum_{n,m \geq 0} f_{nm} \mathfrak{t} \big(
\Delta^{n+m}(X) \Delta^m (Y) \big) \\
\qquad = \sum_{n,m \geq 0} f_{nm} \mathfrak{t} \big( \Delta^m \big( \Delta^n(X) Y \big) \big)
  = \sum_{n,m \geq 0} f_{nm} \mathfrak{t} \big(\Delta^n(X) Y  \big)
  = \mathfrak{t} \big(  F\big(\mo{1}, 1\big)(X \cdot Y) \big).
\end{gather*}
The other identity is a simple consequence of the above one.
\end{proof}

\subsection{Curvature and chiral curvature}

We can present the f\/irst main result on the dressed scalar curvature of the asymmetric noncommutative torus:

\begin{pro}\label{main}
The dressed scalar curvature for the asymmetric torus is
\begin{gather*}
\widetilde{R} = F_{11}\big(\mo{1},\mo{1} \mo{2}\big)(\delta_1(k) \cdot\delta_1(k)) +  F_{22}\big(\mo{1},\mo{1} \mo{2}\big)(\delta_2(k), \delta_2(k)) \\
\hphantom{\widetilde{R} =}{}
+
F_{11}'\big(\mo{1}\big)(\delta_1(k)^2)  + F_{22}'\big(\mo{1}\big)(\delta_2(k)^2)
+ F_{1}\big(\mo{1}\big)(\delta_{11}(k)) + F_2\big(\mo{1}\big)(\delta_{22}(k)),
\end{gather*}
where
\begin{gather*} F_{11}(s,t) =  - \frac{2\pi }{3k^3} \frac{(2 s^2 + 4st + 4s +3 + 8t + 3t^2)}{(t+1)^3 (s+1) (s+t) },\qquad
 F_{22}(s,t) = \frac{\pi }{2k} \frac{(t^2 - 6t +1)}{(t+1)^3 },\\
 F_{11}'(s) =   \frac{4\pi }{3k^3} \frac{1}{(s+1)^3 }, \qquad
 F_{22}'(s) = - \frac{\pi }{2k} \frac{(s^2 - 6s + 1)}{(s+1)^3 },
\end{gather*}
and
\begin{gather*} F_1(s) =  \frac{2\pi }{3k^2} \frac{1}{(s+1)^2 }, \qquad
  F_2(s) = 0. \end{gather*}
The trace of $\widetilde{R}$ vanishes.
\end{pro}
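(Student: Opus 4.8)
The plan is to establish the two assertions of Proposition~\ref{main} in sequence: first the explicit formulae for $\widetilde{R}$, then the vanishing of its trace. For the first part I would start from the expressions $A+B+C$ and $A_\gamma+B_\gamma+C_\gamma$ for $b_2^{\rm even}$ displayed above. Each monomial there has the shape
$k^{n_1}b_0^{m_1}\,X\,k^{n_2}b_0^{m_2}\,Y\,k^{n_3}b_0^{m_3}\,\xi_1^{2k_1}\xi_2^{2k_2}$
with $X,Y\in\{\delta_1(k),\delta_2(k),\delta_{11}(k),\delta_{22}(k),\delta_{12}(k)\}$ (or $Y=1$ in the $C$-terms), so one applies the extended rearrangement formula ${\mathcal J}=F(\mo1,\mo1\mo2)(X\cdot Y)$ from the preceding subsection, with the corresponding $F(s,t)$ given by the stated double integral. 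I would organise the $\delta_1(k)\cdot\delta_1(k)$ contributions, the $\delta_2(k)\cdot\delta_2(k)$ contributions and the $\delta_{jj}(k)$ contributions separately. Within each group the integrals over $v$ and then $u$ are elementary rational/beta-type integrals; summing the resulting rational functions of $s$ and $t$ over all monomials in that group and simplifying yields $F_{11}$, $F_{22}$, $F_1$, $F_2$ as well as the single-variable pieces $F'_{11}$, $F'_{22}$ coming from the $\delta_1(k)^2$, $\delta_2(k)^2$ monomials that appear with $Y$ already contracted (the terms of the form $b_0^{m_1}\delta_j(k)\delta_j(k)b_0^{m_3}$ with no $k$ between them, which after Lesch collapse to a function of a single variable). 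A useful consistency check at this stage is the already-computed commutative limit $\theta=0$: setting $\Delta=\mathrm{id}$, i.e.\ $s=t=1$, the six functions must reproduce the integrated symbol $b_2$, and indeed $F_{11}(1,1)+F'_{11}(1)$ and $F_1(1)$ should recombine to $-\tfrac{\pi}{3}k^{-3}(\delta_1(k))^2+\tfrac{\pi}{6}k^{-2}\delta_{11}(k)$ while $F_{22}(1,1)+F'_{22}(1)+F_2(1)=0$, matching the vanishing of the $\delta_2$-part in the classical formula.

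For the trace statement I would invoke the Lemma proved just above: $\mathfrak t\big(F(\mo1,\mo1\mo2)(X\cdot Y)\big)=\mathfrak t\big(F(\mo1,1)(X)\,Y\big)$ and $\mathfrak t\big(F(\mo1)(X)\big)=\mathfrak t\big(F(1)X\big)$. Applying this to each of the six terms of $\widetilde{R}$ reduces $\mathfrak t(\widetilde R)$ to a sum of traces of the form $\mathfrak t\big(G_1(\mo1)(\delta_1(k))\,\delta_1(k)\big)$, $\mathfrak t\big(G_2(\mo1)(\delta_2(k))\,\delta_2(k)\big)$, $\mathfrak t\big(H_1\,\delta_{11}(k)\big)$, $\mathfrak t\big(H_2\,\delta_{22}(k)\big)$ where $G_1(s)=F_{11}(s,1)+F'_{11}(s)$, $G_2(s)=F_{22}(s,1)+F'_{22}(s)$, $H_1=F_1(1)$, $H_2=F_2(1)=0$. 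Then I would use tracial identities: $\mathfrak t(\delta_{jj}(k)\cdot(\text{const function of }k))$ integrates by parts to $-\mathfrak t(\delta_j(k)\,\delta_j(\text{that function}))$, turning the $\delta_{jj}$ terms into $\delta_j(k)\cdot\delta_j(k)$ terms; and for the remaining $\mathfrak t\big(G_j(\mo1)(\delta_j(k))\,\delta_j(k)\big)$ one uses the symmetry $\mathfrak t\big(\Delta^n(X)\,Y\big)=\mathfrak t\big(X\,\Delta^{n}(Y)\big)$ together with $\Delta(\delta_j(k))$-relations to show each combination cancels. Concretely $F_1(s)=\tfrac{2\pi}{3k^2}(s+1)^{-2}$ and the fact that $\delta_1(k^{-1})=-k^{-1}\delta_1(k)k^{-1}$ (so $\delta_{11}$ of a function of $k$ reduces to $\delta_1(k)\,\delta_1(k)$ weighted appropriately) are exactly what make the $\delta_1$-sector collapse; similarly $G_2(s)=F_{22}(s,1)+F'_{22}(s)$ should vanish identically as a rational function, which already kills the entire $\delta_2$-sector before any integration by parts.

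The main obstacle is the bookkeeping in the first part: there are on the order of forty monomials in $A+B+C$, each producing a distinct $F(s,t)$ after the $(u,v)$-integration, and one must sum them and simplify to the compact closed forms stated. Keeping track of the exponents $(n_1,n_2,n_3,m_1,m_2,m_3,k_1,k_2)$ and of the ordering of $k$'s relative to $\delta_j(k)$ (which determines how many $\Delta$'s act on which slot) is error-prone; here the classical-limit check and the internal check that the coefficient functions are genuinely rational with the predicted denominators $(s+1)^a(t+1)^b(s+t)^c$ are the safeguards. By contrast the trace computation is comparatively short once the Lemma and the integration-by-parts identity $\mathfrak t(\delta_j(a)b)=-\mathfrak t(a\,\delta_j(b))$ are in hand: it is essentially an algebraic identity among the six rational functions evaluated/specialised at $t=1$, and the heaviest step is verifying $F_{22}(s,1)+F'_{22}(s)=0$ and that the $\delta_1$-combination telescopes under $\mathfrak t(\Delta^n X\cdot Y)=\mathfrak t(X\cdot\Delta^n Y)$.
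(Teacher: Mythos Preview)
Your outline for deriving the six functions $F_{11},F_{22},F'_{11},F'_{22},F_1,F_2$ from $A+B+C$ via the extended rearrangement lemma is exactly the paper's route, and the commutative-limit check is a sound safeguard. The trace argument, however, contains two slips that matter.

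First, your application of the one-variable Lemma is off. For $F'_{jj}(\mo1)(\delta_j(k)^2)$ the Lemma gives $\mathfrak t\big(F'_{jj}(1)\,\delta_j(k)^2\big)$, i.e.\ the function collapses to its value at~$1$, not to $F'_{jj}(s)$. Hence the correct combinations are $G_j(s)=F_{jj}(s,1)+F'_{jj}(1)$, not $F_{jj}(s,1)+F'_{jj}(s)$. With the right version, $G_2(s)=F_{22}(s,1)+F'_{22}(1)=-\tfrac{\pi}{4k}+\tfrac{\pi}{4k}=0$ as you want; your stated $F_{22}(s,1)+F'_{22}(s)$ is \emph{not} identically zero, so as written your $\delta_2$-sector argument fails.

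Second, the tracial identity you invoke, $\mathfrak t(\Delta^n(X)Y)=\mathfrak t(X\,\Delta^n(Y))$, is false; cyclicity gives $\mathfrak t(\Delta^n(X)Y)=\mathfrak t(X\,\Delta^{-n}(Y))$. More importantly, the $\delta_1$-sector does not ``telescope'' by any generic symmetry. In the paper one first converts the $F_1(1)\,\delta_{11}(k)$ term via $\mathfrak t(k^{-2}\delta_{11}(k))=2\,\mathfrak t\big(k^{-3}\Delta^{-1}(\delta_1(k))\,\delta_1(k)\big)$, combines it with $G_1(s)=F_{11}(s,1)+F'_{11}(1)=-\tfrac{\pi}{3k^3}\tfrac{s+3}{(s+1)^2}$ to obtain $\mathfrak t\big(k^{-3}H(\Delta)(\delta_1(k))\,\delta_1(k)\big)$ with $H(s)=\tfrac{\pi}{3}\tfrac{1-s}{s(s+1)^2}$, and then uses the pair of identities
\[
\mathfrak t\big(k^{-3}H(\Delta)(A)B\big)=\mathfrak t\big(k^{-3}B\,H(\Delta)\Delta^3(A)\big)=\mathfrak t\big(k^{-3}A\,H(\Delta^{-1})(B)\big)
\]
(the $\Delta^3$ coming from commuting past $k^{-3}$). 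For $A=B=\delta_1(k)$ these force the trace to vanish precisely because $H(s)s^3=-H(s^{-1})$, an anti-symmetry specific to this $H$. That check is the crux of the Gauss--Bonnet statement here and is absent from your sketch.
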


\begin{proof}
First of all, observe that
\begin{gather*}
F_{22}(s,1) + F_{22}'(1) = 0, \qquad F_2(1) = 0,
\end{gather*}
so all terms containing $\delta_2(k)$ and $ \delta_{22}(k)$ vanish.

For the terms containing $\delta_1(k)$ we have
\begin{gather*}
F_{11}(s,1) + F_{11}'(1) =  - \frac{\pi}{3 k^3} \frac{s+3}{(s+1)^2 }.
\end{gather*}
Then using the identity
\begin{gather*}
\mathfrak{t}\left( k^{-2} \delta_{11}(k) \right) = 2 \mathfrak{t}\left(  k^{-2} \delta_1(k) k^{-1} \delta_1(k) \right) =
2 \mathfrak{t}\left(  k^{-3} \Delta^{-1}(\delta_1(k)) \delta_1(k) \right),
\end{gather*}
which follows directly from the Leibniz rule and the fact that the trace is closed, we can rewrite the trace of the sum of the remaining  terms $F_{11}$, $F_{11}'$ and $F_{1}$ as
\begin{gather*}
 \mathfrak{t} \left( k^{-3} H(\Delta) (\delta_1(k) ) \delta_1(k) \right),
\end{gather*}
where
\begin{gather*}
H(s) =  \frac{\pi}{3 k^3} \frac{1-s}{s (s+1)^2 }.
\end{gather*}

Next, we observe that for any $A$ and $B$ and an entire function $H$
\begin{gather*}
\mathfrak{t} \left( k^{-3} H(\Delta)(A) B \right) = \mathfrak{t} \left(  H(\Delta)\big(\Delta^3(A) \big) k^{-3} B \right) =  \mathfrak{t} \left(  k^{-3} B H(\Delta)\big(\Delta^3(A) \big) \right),
\end{gather*}
and
\begin{gather*}
\mathfrak{t} \left( k^{-3} H(\Delta)(A) B \right) = \mathfrak{t} \left(  k^{-3} A H\big(\Delta^{-1}\big)(B) \right).
\end{gather*}

Now if $A=B$ then both expressions on the right-hand side are identical. In our case, however
\begin{gather*}
H(s) s^3 = \frac{\pi}{3 k^3} \frac{s^2 (1-s)}{(s+1)^2 }
\qquad \text{and}\qquad
H\big(s^{-1}\big) = \frac{\pi}{3 k^3} \frac{s^2 (s-1)}{(s+1)^2 },
\end{gather*}
and therefore since
\begin{gather*}
H(s) s^3 = - H\big(s^{-1}\big),
\end{gather*}
the trace of the above expression must vanish, hence, the Gauss--Bonnet theorem holds.
\end{proof}

Next we give the second main result on the dressed chiral curvature of the asymmetric noncommutative torus:

\begin{pro}\label{main2}
The dressed chiral curvature for the asymmetric torus is
\begin{gather*} \widetilde{R}_\gamma = G_{12}\big(\mo{1},\mo{1} \mo{2}\big)(\delta_1(k), \delta_2(k)) \\
\hphantom{\widetilde{R}_\gamma =}{}
+ G_{21}\big(\mo{1},\mo{1} \mo{2}\big)(\delta_2(k), \delta_1(k)) + G\big(\mo{1}\big)(\delta_{12}(k)),
\end{gather*}
where
\begin{gather*} G_{12}(s,t) = \frac{\pi }{k^2} \frac{(t-1)}{(t+1)^2 (s+1)}, \qquad
 G_{21}(s,t) = \frac{\pi }{k^2} \frac{(t-1)}{(t+1)^2 (s+t)},
 \qquad
G(s) = - \frac{\pi }{k} \frac{(s-1)}{(s+1)^2 }.
\end{gather*}
The trace of~$R_\gamma$ vanishes.
\end{pro}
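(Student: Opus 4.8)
\section*{Proof proposal for Proposition~\ref{main2}}

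The plan is to follow the route of the proof of Proposition~\ref{main}, but starting from the chiral symbol $b_{\gamma 2}^{\rm even} = A_\gamma + B_\gamma + C_\gamma$ recorded above. Each monomial occurring there has the shape $k^{n_1} b_0^{m_1} X\, k^{n_2} b_0^{m_2} Y\, k^{n_3} b_0^{m_3}\, \xi_1^{2k_1}\xi_2^{2k_2}$, where $(X,Y) = (\delta_1(k),\delta_2(k))$ or $(\delta_2(k),\delta_1(k))$ in $A_\gamma$ and $B_\gamma$, and $X = \delta_{12}(k)$, $Y = 1$ in $C_\gamma$. First I would apply the (extended) rearrangement lemma for the integrals $\mathcal{J}$ above to each monomial: the $\xi$-integration converts it into $F(\mo{1},\mo{1}\mo{2})(X\cdot Y)$, or into $F(\mo{1})(X)$ when $Y = 1$, with $F$ the explicit rational function of the exponents $(n_i,m_i,k_j)$ given by the displayed formula for $F(s,t)$. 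Then I would sort the contributions by tensor structure --- those with $\delta_1(k)\otimes\delta_2(k)$, those with $\delta_2(k)\otimes\delta_1(k)$, and those with $\delta_{12}(k)$ --- and add up the corresponding rational functions. Carrying out the partial-fraction arithmetic should collapse the three sums to exactly the functions $G_{12}(s,t)$, $G_{21}(s,t)$ and $G(s)$ of the statement, so that $\widetilde{R}_\gamma$ acquires the asserted form. It is essential to keep the two orderings $\delta_1(k)\otimes\delta_2(k)$ and $\delta_2(k)\otimes\delta_1(k)$ separate, since these elements do not commute; this is exactly why two distinct functions $G_{12}$ and $G_{21}$ appear, whereas they coincide in the commutative limit.

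For the vanishing of the trace I would invoke the Lemma above. Splitting $\mathfrak{t}(\widetilde{R}_\gamma)$ into its three pieces and applying $\mathfrak{t}(F(\mo{1},\mo{1}\mo{2})(X\cdot Y)) = \mathfrak{t}(F(\mo{1},1)(X)\,Y)$ to the first two terms and $\mathfrak{t}(F(\mo{1})(X)) = \mathfrak{t}(F(1)X)$ to the third, one gets
\begin{gather*}
\mathfrak{t}\big(\widetilde{R}_\gamma\big) = \mathfrak{t}\big(G_{12}(\mo{1},1)(\delta_1(k))\,\delta_2(k)\big) \\
{}+ \mathfrak{t}\big(G_{21}(\mo{1},1)(\delta_2(k))\,\delta_1(k)\big) + \mathfrak{t}\big(G(1)\,\delta_{12}(k)\big).
\end{gather*}
But $G_{12}(s,1) = G_{21}(s,1) = 0$ and $G(1) = 0$ identically, because each of the three functions carries a factor $(t-1)$, respectively $(s-1)$, evaluated at the argument $1$. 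Hence every term on the right-hand side vanishes, so $\mathfrak{t}(\widetilde{R}_\gamma) = 0$. In contrast to the scalar case, where the Gauss--Bonnet identity of Proposition~\ref{main} rested on the antisymmetry $H(s)s^3 = -H(s^{-1})$, here the vanishing of the trace is immediate once the closed form of $\widetilde{R}_\gamma$ is in hand.

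The main obstacle is the first step: organising the term-by-term application of the rearrangement lemma to the dozen or so monomials of $A_\gamma$, $B_\gamma$, $C_\gamma$ with their correct exponents, and verifying that the resulting sums of rational functions of $(s,t)$ genuinely simplify to the compact expressions for $G_{12}$, $G_{21}$ and $G$. A convenient consistency check is the commutative computation already performed above: for $\theta = 0$ the functions $G$ must reproduce, after $\xi$-integration, the displayed $b_{\gamma 2}(\xi)$, whose integral vanishes, in agreement with $\sqrt{g}\,R_\gamma = 0$.
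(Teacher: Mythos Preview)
Your proposal is correct and matches the paper's own proof: the formula for $\widetilde{R}_\gamma$ is obtained exactly as you describe, by applying the rearrangement lemma term by term to $A_\gamma$, $B_\gamma$, $C_\gamma$ and summing the resulting rational functions (the paper compresses this into the phrase ``by computation''), and the vanishing of the trace follows immediately from $G_{12}(s,1) = G_{21}(s,1) = G(1) = 0$, just as you say.
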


\begin{proof}
By computation.
Then the last statement follows from
\begin{gather*}
G_{12}(s,1) = G_{21}(s,1) = G(1) = 0. \tag*{\qed}
\end{gather*}
\renewcommand{\qed}{}
\end{proof}

\section{Final comments}\label{section4}

\looseness=-1
We have introduced a new class of spectral triples that describe a curved geometry of the noncommutative torus.
The associated Dirac operators are obtained
through a partial rescaling of the f\/lat Dirac operator by elements from the opposite coordinate algebra.
It turns out that the dressed scalar curvature does not vanish,
but the Gauss--Bonnet theorem holds for that family.

\looseness=-1
Let us stress that even though in the classical limit they arise from the metric, which is conformally equivalent to the f\/lat one, this is an open problem in the noncommutative situation. An interesting question is if and in which sense this
can be established also in the noncommutative case.

It becomes now more evident that the class of admissible Dirac operators on the noncommutative torus is certainly bigger than the one-parameter family of ``f\/lat metric'' (equivariant) Dirac operators.
It is therefore necessary to study the conditions and the general setup of such construction, and we
conjecture that the class should contain all operators proposed in~\cite{DS} and their f\/luctuations.

Although in this paper we have concentrated on the 2-dimensional case, it is natural to consider
generalisations of the result to $3$ and more dimensions. In particular it is interesting
to study the curvature and minimality of yet another class Dirac operators introduced in
\cite{DS2}, which are similar to the ones encountered here.

Finally, we can comment on the purely algebraic computation by  Rosenberg~\cite{RsNC}
of the curvature and Gauss--Bonnet term for the conformally rescaled metric on the noncommutative torus using the noncommutative notion of Levi-Civita connection.
His method can be extended to the case we consider and a direct computation yields the Gauss--Bonnet functional
\begin{gather*}
2 {\mathfrak t} \left( - \tfrac{9}{4} k^{-2} \delta_1(k) k^{-1} \delta_1(k)
+ \tfrac{1}{4}  k^{-3} \delta_1(k) \delta_1(k) + k^{-2} \delta_{11}(k) \right),
\end{gather*}
which however does not vanish,
and the Gauss--Bonnet theorem does not hold.

Instead there is an even simpler possibility that yields an algebraic expression for a candidate for the scalar curvature.
Namely, we note that the formula~(2.1) in~\cite{DS} for the classical scalar curvature in terms of the orthonormal basis of vector f\/ields ({\em moving frame} or {\em two-bein}) for the partially rescaled metric, makes a perfect sense also for the noncommutative torus
(there is no ordering ambiguity).
Taking the two-bein as $(\delta_1, k \delta_2)$ one obtains
\begin{gather*} {\mathfrak t} \left( - 4  k^{-2} \delta_1(k) k^{-1} \delta_1(k)
+  2k^{-2} \delta_{11}(k) \right),\end{gather*}
which vanishes due to the properties of the trace.

\appendix

\section{Appendix}\label{appendixA}

{\sloppy The unital $*$-algebra ${\mathcal A}_\theta$ of smooth functions on the noncommutative 2-torus
consists of power series of the form
\begin{gather*} a =\sum_{m,n \in{\mathbb Z}^2} a_{mn}  U_1^m U_2^n,
 \end{gather*}
where the (double) sequence of coef\/f\/icients $\{a_{mn} \in {\mathbb C}, \, (m,n) \in{\mathbb Z}^2 \}$
decreases rapidly at `inf\/inity'
\begin{gather*}
\| a\|_k := \sup_{m,n} \big\{\big(1+m^2+n^2\big)^k  |a_{mn}|^2\big\} < \infty ,\qquad\forall\, k\in {\mathbb N} .
\end{gather*}
The $*$-algebra structure is specif\/ied by the commutation relations of the unitary genera\-tors~$U_1$,~$U_2${\samepage
\begin{gather*}
U_1  U_2 = e^{2\pi i \theta} U_2  U_1,
\end{gather*}
where $\theta$ is a real parameter.}

}

On $A_\theta$ there is a
(unique if  $\theta$ is irrational)
normalised faithful positive def\/inite tracial state
$\mathfrak{t} \colon {\mathcal A}_\theta \rightarrow {\mathbb C}$; it is given by
\begin{gather*}
\mathfrak{t}(a) =
\mathfrak{t} \bigg(\sum_{(m,n) \in {\mathbb Z}^2} a_{mn} ~U_1^m U_2^n\bigg) := a_{00}.
\end{gather*}
We call $\mathfrak{t}$ {\em trace}, for brevity.
It is invariant under the action of the commutative torus ${\mathbb T}^2= U(1)\times U(1)$
on ${\mathcal A}_\theta$ by automorphisms,
$ U_1\mapsto z_1U_1$ and $ U_2\mapsto z_2U_2$, where $ |z_1|=1= |z_2|$.
The inf\/initesimal action is determined by two commuting derivations $\delta_1$, $\delta_2$
acting by
\begin{gather*}
\delta_\mu (U_\nu) = 2\pi i  \delta_\mu^\nu U_\nu, \qquad \mu,\nu = 1, 2.
\end{gather*}
The invariance means just that $\mathfrak{t}(\delta_\mu a) = 0$, for $\mu = 1, 2$
and any $a\in{\mathcal A}_\theta$.

There is a sesquilinear form on ${\mathcal A}_\theta$
\begin{gather*}
(a,b)= \mathfrak{t}(a^*b)
\end{gather*}
and a norm
\begin{gather*}
\|a\| := \sqrt{\mathfrak{t}(a^*a)}.
\end{gather*}
The completion of ${\mathcal A}_\theta$ in this norm yields a Hilbert space
$H_{\rm o}:= L^2({\mathcal A}_\theta ,\mathfrak{t} )$,
which carries a~$*$-representation of ${\mathcal A}_\theta$ by left multiplication operators
\begin{gather*}
\pi(a)\colon \  b \mapsto ab.
\end{gather*}
Moreover $\pi$ is the GNS representation w.r.t.\ the state~$\mathfrak{t}$,
 the cyclic and separating vector is $1\in{\mathcal A}_\theta\subset H_{\rm o}$.
The operator norm of~$\pi(a)$ is~$\|a\|$.
The associated {\em Tomita conjugation} provides an antiunitary operator $J_{\rm o}$ on~$H_{\rm o}$,
\begin{gather*}
J_{\rm o}({b}) := {b^*}.
\end{gather*}
Clearly $J_{\rm o}^2 = 1$ and
\begin{gather*}
\pi^{\rm op}(a) := J_{\rm o} \pi(a^*) J_{\rm o} \colon \  b \mapsto ba
\end{gather*}
(right multiplication by~$a$) is a representation of the opposite algebra of~${\mathcal A}_\theta$.
Since the left and right multiplications commute, we have
\begin{gather}
[\pi(a), J_{\rm o} \pi(b^*) J_{\rm o}] = 0, \qquad  \forall\, a,b.
\label{eq:Jzero-LRcomm}
\end{gather}
(In the paper we are omitting the symbol~$\pi$.)

For Dirac spinors (Hilbert space)
we take the trivial module over $A_\theta$ of rank two
\begin{gather*}
H=H_{\rm o}\otimes {\mathbb C}^2 \simeq H_{\rm o} \oplus H_{\rm o} =: H_+ \oplus H_-  .
\end{gather*}
The inner product is $\langle (a,a^\prime),(b,b^\prime)\rangle=\mathfrak{t}(a^*b+a^\prime b^{\prime*})$.
The \textit{charge conjugation} operator on~$H$ is given by
\begin{gather*}
J := -iJ_{\rm o} \otimes (\sigma_2\circ {\rm c.c.}).
\end{gather*}

\subsection*{Acknowledgements}

L.D.~gratefully acknowledges the hospitality of the Institute of Physics, Jagiellonian University in Krak\'ow.
L.D.~partially supported by PRIN 2010 grant ``Operator Algebras,
Noncommutative Geometry and Applications'', A.S.~partially supported by NCN grant 2012/06/M/ST1/00169.
 The authors express their gratitude to the referees for valuable comments.

\pdfbookmark[1]{References}{ref}
\LastPageEnding

\end{document}